\newcommand{\fcat}[1]{\mathbf{#1}}      
\newcommand{\clm}{\mbox{Colim}}
\DeclareMathOperator{\Lan}{Lan}         
\newcommand{\op}{\mathrm{op}}           
\newcommand{\Set}{\fcat{Set}}           
\newcommand{\ladj}{\dashv}              
\newcommand{\oppair}[4]{%
\xymatrix{%
#1 \ar@<.5ex>[r]^-{#3} &{#2}\ar@<.5ex>[l]^-{#4}%
}}
\newcommand{\oppairi}[4]{%
\xymatrix@1{%
#1 \ar@<.5ex>[r]^{#3} &{#2}\ar@<.5ex>[l]^{#4}%
}}
\newcommand{\parpair}[4]{%
\xymatrix{%
#1 \ar@<.5ex>[r]^{#3} \ar@<-.5ex>[r]_{#4} &#2%
}}
\newcommand{\parpairi}[4]{%
\xymatrix@1{%
#1 \ar@<.5ex>[r]^{#3} \ar@<-.5ex>[r]_{#4} &#2%
}}
\newcommand{\adjn}[4]{%
\xymatrix{
#1 \ar@{}[d]|\ladj \ar@<1ex>[d]^{#4} \\
#2 \ar@<1ex>[u]^{#3}
}}
\newcommand{\hadjnli}[4]{%
\xymatrix@1{
#1 \ar@<1.1ex>[r]^-{#3} \ar@{}[r]|-\bot &#2 \ar@<1.1ex>[l]^-{#4}}}
\newcommand{\hadjnri}[4]{%
\xymatrix@1{
#1 \ar@<1.1ex>[r]^-{#3} \ar@{}[r]|-\top &#2 \ar@<1.1ex>[l]^-{#4}}}
\newtheorem{exem}{Exemple}
\newtheorem{coro}{Corollaire}
\newtheorem{thm}{Th\'eor\`eme}
\newtheorem{prop}{Proposition}
\newtheorem{defn}{D\'efinition}
\newtheorem{exer}{Exercice}
\newcommand{\ra}{\longrightarrow}
\renewcommand{\1}{{\bf 1}}
\newcommand{\n}{\mathbb{N}}
\newcommand{\R}{\mathbb{R}}
\newcommand{\Z}{\mathbb{Z}}
\newcommand{\Q}{\mathbb{Q}}
\newcommand{\ds}{\displaystyle}
\newcommand{\SET}{\mbox{\bf SET}}
\newcommand{\lan}{\mbox{Lan}}
\newcommand{\Ran}{\mbox{Ran}}
\def\subclassname{{\bfseries Mathematics Subject Classification
(2000)}\enspace}
\def\subclass#1{\par\addvspace\medskipamount{\rightskip=0pt plus1cm
\def\and{\ifhmode\unskip\nobreak\fi\ $\cdot$
}\noindent\subclassname\ignorespaces#1\par}}
\begin{document}

\title{Optimisation Abstraite\footnote{Expos\'e aux s\'eminaires de laboratoire LATAO, Facult\'e des Sciences de Tunis, Jeudi 18-2-2021.  }}


    \author{Fethi Kadhi }


\date{ }

\maketitle
\begin{abstract}
Les concepts de base en théorie des catégories sont les représentables, les adjoints, les limites et les monades.
Dans cet expos\'e, on définit la notion d'extension de Kan et on montre que cette notion couvre ces concepts. 
\end{abstract}

\noindent{\bf Keywords: }{Kan extension, Pointwise Kan extension, Représentables, Limites, Adjonctions, Monades. }\\

\section{Introduction}\label{s1}
Depuis sa naissance, la théorie des catégories représente un outil d'unification de différentes structures en mathématiques.
Les ensembles, les monoïdes, les groupes, les anneaux, les espaces vectoriels, les algèbres sont des catégories.
Dans un autre niveau d'abstraction, ces structures sont des objets d'une même catégorie.
L'existence d'objets et de flèches spéciaux dans une catégorie \`a donn\'e lieu aux concepts catégoriques de base,
 tels que les représentables, les limites, les adjoints et les monades. Fidèle \`a son rôle d'unification, la théorie des catégorie
fournit un concept qui permet d'unifier ces concepts de base. En computer Science, les extensions de Kan sont appliquees pour optimiser
les programmes fonctionnels \cite{HR}.

\section{Définitions et exemples}
\begin{defn}
Soient $A\stackrel{X}{\longrightarrow}C$ et $A\stackrel{K}{\longrightarrow}B$ deux foncteurs de même domaine.
Une extension \`a gauche de $X$ le long de $K$ consiste en une  paire $(L,\eta)$ o\`u $L:B\longrightarrow C$ est un foncteur et
$\eta:X\Longrightarrow LK$ est une transformation naturelle
$$	
\xymatrix{
A \ar[rr]^{X} \ar[dr]_{K}& & C\\
    & B \ar@{<=}[u]_{\eta}\ar[ur]_L 
} 
$$
universelle dans le sens suivant:
Pour un foncteur $B\stackrel{L'}{\longrightarrow}C$ et une  transformation naturelle $X\stackrel{\eta^{'}}{\Longrightarrow}L'K$,
$$
\xymatrix{
A \ar[rr]^{X} \ar[dr]_{K}& & C\\
    & B \ar@{<=}[u]_{\eta'}\ar[ur]_{L^{'}} 
} 
$$
il existe une unique transformation naturelle  $L\stackrel{\alpha}{\Longrightarrow}L'$\\
\begin{center}
\begin{tikzcd}[row sep=1cm, column sep=1cm]
A  \ar[dr, "K"', ""{name=K,near end}]
            \ar[rr, "X", ""{name=X, below,bend right}]&& C\\
& B    \ar[ur, bend left, "L", ""{name=L, below}]
                \ar[ur, bend right, "L'"', ""{name=LL}]
%
\arrow[Rightarrow, "\exists !\alpha", from=L, to=LL]
\arrow[Rightarrow, from=X, "\eta",below]
\end{tikzcd}
\end{center}
 telle que $\eta'=\alpha K.\eta$. C'est \`a dire $\eta'$ se factorise de façon unique suivant $\eta$.
$$
\xymatrix{
X \ar@{=>}[rr]^{\eta} \ar@{=>}[dr]_{\eta'}& & LK\ar@{=>}[ld]^{\alpha K}\\
    & L'K 
} 
$$
On \'ecrit: $\lan_K(X)=L$
\end{defn}
On voit que $\lan_K(X)$ parait comme un objet initial dans la catégorie de quelques foncteurs $L'\in C^B$.
C'est pour cette raison qu'on dit que $\lan_K(X)$, s'il existe, représente la façon optimale pour étendre le fonceur
$X$ du domaine de $X$ au codomaine de $K$.
\begin{exem} Un représentable est une extension de Kan:\\
Soit $1$ la catégorie terminale formée par un seul objet $1$ reli\'e \`a lui même par l'unique flèche $1_1$.
Soit $*:1\longrightarrow\Set$ le foncteur qui associe \`a l'objet $1$ un singleton $\{*\}$.
Soient $C$ une catégorie localement petite et $c$ un objet de $C$. On peut regarder $c$ comme un foncteur
$c:1\longrightarrow C$ avec $c(1)=c$ et $c(1_1)=1_c$. Est ce qu'il existe une façon optimale pour étendre
$*$ de $1$\`a $C$?
\end{exem}
 En optimisation, généralement, on procède de deux manières: ou bien on applique une série de conditions nécessaires d'optimalit\'e a fin de localiser le candidat optimal, ou bien on choisit un bon candidat et on montre que la fonction coût est optimale en ce candidat. Dans cet exemple on va choisir le bon candidat et vérifier qu'il est optimal. Dans un prochain paragraphe, nous allons présenter une formule qui permet d'exprimer ponctuellement le candidat optimal.
Soit $H^c:C\longrightarrow\Set$ définie par $H^c(x)=C(c,x)$. Montrons que $\lan_c(*)=H^c$.
\begin{proof}
Une transformation naturelle $\eta:*\Longrightarrow H^cc$ est définie par sa composante suivant $1$, $\eta_1:1\longrightarrow C(c,c)$.
$\eta_1$ consiste donc en un choix d'un élément de l'ensemble $C(c,c)$.
 Soit  $\eta_1=1_c$. On a besoin de vérifier que la paire $(H^c,\eta)$ est universelle dans le sens qui permet d'\'ecrire $\lan_c(*)=H^c$.
Soient $F: C\longrightarrow\Set$ et $\gamma:*\Longrightarrow Fc$. $\gamma_1:1\longrightarrow FC$ consiste en un élément $x\in Fc$.
D'après la démonstration du Lemme de Yoneda, il existe une unique transformation naturelle $\alpha:H^c\Longrightarrow F$ qui vérifie
$\alpha_c(1_c)=x$, c'est \`a dire $\alpha c\circ\eta=\gamma$. Ainsi $\lan_c(*)=H^c$.
\end{proof}
Ce premier exemple nous permet de regarder un représentable comme une extension de Kan.
Dans des prochains exemples, nous allons voir que les limites, les adjoints et les monades sont aussi des extensions de Kan.
C'est dans ce sens que Maclane \cite{ML} disait que tous les concepts sont des extensions de Kan.
En fait, la théorie des catégories a été crée pour unifier des constructions algébriques et géométriques différentes.
L'extension de Kan fait la même chose dedans la catégories elle-même. \\ 
La propriété universelle qui caractérise $(\lan_K(X),\eta)$ peut être exprimée par l'isomorphisme naturel en $H\in C^B$
\begin{equation}
C^B(\lan_K(X),H)\cong C^A(X,HK)
\label{eq:1}
\end{equation}
$\eta:X\Longrightarrow\lan_K(X)K$ est la transformation naturelle qui correspond \`a $1_{\lan_K(X)}$.\\
En passant \`a un univers ensembliste plus grand, on peut interpréter $\lan_K(X)$ de deux manières:
\begin{enumerate}
	\item $\lan_K(X)$ est une représentation du foncteur:
	$$ \begin{array}{rcl}
C^A(X,-.K):C^B&\ra & \SET\\
     H&\longmapsto&C^A(X,HK)										
\end{array}$$
	\item Si $\lan_K(X)$ existe pour tout $X\in C^A$ alors on peut regarder 
	$$ \begin{array}{rcl}
\lan_K(.):&C^A\longrightarrow& C^B\\
     X&\longmapsto&\lan_K(X)										
\end{array}$$
	
	comme l'adjoint \`a gauche du foncteur
	$$ \begin{array}{rcl}
K^*:C^B&\longrightarrow& C^A\\
     H&\longmapsto&HK										
\end{array}$$
Ce qui s'\'ecrit brièvement $\lan_K(-)\ladj -.K$.\\
	La notion duale de l'extension \`a gauche est l'extension \`a droite décrite dans la définition suivante:
\end{enumerate}

\begin{defn}
\'Etant donn\'es deux foncteurs $A\stackrel{X}{\longrightarrow}C$ et $A\stackrel{K}{\longrightarrow}B$.
Une extension \`a droite de $X$ le long de $K$ consiste en une paire
  $(R,\epsilon)$ avec $R:B\longrightarrow C$ est un foncteur et $\epsilon:RK\Longrightarrow X$ est une 
	 transformation naturelle universelle. 
	$$	
\xymatrix{
A \ar[rr]^{X} \ar[dr]_{K}& & C\\
    & B \ar@{=>}[u]_{\epsilon}\ar[ur]_R 
} 
$$
	Tout diagramme de la forme
	$$	
\xymatrix{
A \ar[rr]^{X} \ar[dr]_{K}& & C\\
    & B \ar@{=>}[u]_{\gamma}\ar[ur]_{R^{'}}
} 
$$
entraîne l’existence d'une unique transformation naturelle $\alpha:R'\Longrightarrow R$,
($R$ est un objet terminal dans une certaine catégorie), de sorte que $\gamma=\epsilon\circ\alpha K$.\\
	On \'ecrit $\Ran_K(X)=R$
\end{defn}
$\Ran_K(X)=R$ est caractérisé par l'isomorphisme naturel en $H\in C^B$:
\begin{equation}
C^B(H,\Ran_K(X))\cong C^A(HK,X)
\label{eq:2}
\end{equation}

Ainsi, $\Ran_K(X)$ apparaît comme  un objet terminal qui représente le foncteur ensembliste
 $$ \begin{array}{rcl}
C^A(-.K,X):(C^B)^{op}&\ra & \Set\\
     H&\longmapsto&C^A(HK,X)										
\end{array}$$
Si $\Ran_K(X)$ existe pour tout $X\in C^A$ alors $\Ran_K(-)$ est l'adjoint \`a droite de $K^*=-.K$.
Par combinaison avec l'extension \`a gauche, on obtient:
$$\Lan_K(-)\ladj K^*\ladj\Ran_K(-)$$
\begin{exem} La limite est une extension de Kan:\\
Soit un diagramme $D:I\longrightarrow A$. Soient $\1$ la catégorie terminale form\'ee par un seul objet et
$\Delta_1:I\longrightarrow 1$ l'unique foncteur qui relie $I$ \`a $1$.
$$	
\xymatrix{
I \ar[rr]^{D} \ar[dr]_{\Delta_1}& & A\\
    & \1 \ar@{=>}[u]_{\epsilon}\ar@{-->}[ru]
		}
$$

 On a:\\
 $\ds\lim_I D$ existe si et seulement 
$\Ran_{\Delta_1}(D)$ existe, dans un tel cas, $\ds\lim_I D\cong \Ran_{\Delta_1}(D).$
\end{exem}
\begin{proof}
 Supposons que $\ds\lim_I D$ existe. On pose $l=\ds\lim_I D$. 
$l$ est caractérisé par l'isomorphisme naturel en $a$
$$ A(a,l)\cong A^I(\Delta_a,D)\leqno (*)$$
avec $\Delta_a:I\longrightarrow A$ le foncteur diagonal défini par $\Delta_a(i)=a$.
D'un autre point de vu, on peut voir $a\in A$ comme un foncteur 
$a:\1\longrightarrow A$. La compos\'e $a\circ \Delta_1:I\longrightarrow A$ n'est autre que $\Delta_a$.
(*) s'\'ecrit alors encore
$$ A(a,l)\cong A^I(a\circ\Delta_1,D)$$
Il s'ensuit de (\ref{eq:2}) que $\Ran_{\Delta_1}(D)=l$.
$$	
\xymatrix{
I \ar[rr]^{D} \ar[dr]_{\Delta_1}& & A\\
    & \1 \ar@{=>}[u]_{\epsilon}\ar@/^/[ru]^l\ar@/_/[ru]_a
		}
$$
La réciproque est triviale.
\end{proof}
\begin{exem} L'adjonction est une manifestation de l'extension de Kan:\\
Soient $L$ et $R$ deux foncteurs parallèles de sens opposés: $\oppairi{A}{B}{L}{R}$.\\
On a $L\ladj R$ si et seulement si 
\begin{enumerate}
	\item $\lan_L(1_A)=R$
	\item $\lan_L(L)=LR$
\end{enumerate}
\begin{proof}
Soit $C$ une catégorie. $\oppairi{A}{B}{L}{R}$ induit $\oppairi{C^B}{C^A}{L^*}{R^*}$
avec $L^*(H)=HL$ pour $H\in C^B$ et $R^*(S)=SR$ pour $S\in C^A$. Si $L\ladj R$ alors $R^*\ladj L^*$, ce qui s'\'ecrit
$$C^B(SR,H)\cong C^A(S,HL)\leqno(**)$$
\begin{itemize}
	\item Pour $C=A$ et $S=1_A$, (**) s'\'ecrit:
	$$A^B(R,H)\cong A^A(1_A,HL)$$
	Il s'ensuit que $R\cong \lan_L(1_A)$
	\item Pour $C=B$ et $S=L$, (**) s'\'ecrit:
	$$B^B(LR,H)\cong B^A(L,HL)$$
	Il s'ensuit que $\lan_L(L)=LR$, c'est \`a dire $L\lan_L(1_A)=\lan_(L1_A)$.
	On dit que $L$ préserve $\lan_L(1_A)$.
\end{itemize}
Réciproquement, si $\lan_L(1_A)=R$ et $\lan_L(L)=LR$ alors $L\ladj R$.\\
En effet, $\lan_L(1_A)=R$ fournit une transformation naturelle $\eta:1_A\Longrightarrow RL$.\\
On a $L\eta:L\Longrightarrow LRL$, $\lan_L(L)=LR$ et $1_L:L\Longrightarrow 1_B\circ L$ fournissent une transformation
naturelle unique $\epsilon:LR\Longrightarrow 1_B$ qui vérifie l’identité triangulaire
$\epsilon L\circ L\eta=1_L$
\begin{center}
\begin{tikzcd}[row sep=1cm, column sep=1cm]
A  \ar[dr, "L"', ""{name=L,near end}]
            \ar[rr, "L", ""{name=L, below,bend right}]&& B\\
& B    \ar[ur,bend left, "LR", ""{name=LR, below}]
                \ar[ur, bend right, "1_B"', ""{name=LL}]
%
\arrow[Rightarrow, "\exists !\epsilon", from=LR, to=LL]
\arrow[Rightarrow, from=L,right, "L\eta"]
\end{tikzcd}
\end{center}
\end{proof}
\end{exem}
En considérant le diagramme qui correspond \`a $\lan_L(1_A)=R$
\begin{center}
\begin{tikzcd}[row sep=1cm, column sep=1cm]
A  \ar[dr, "L"', ""{name=L,near end}]
            \ar[rr, "1_A", ""{name=A, below,bend right}]&& A\\
& B    \ar[ur,bend left, "R", ""{name=R, below}]
                \ar[ur, bend right, "R"', ""{name=RR}]
%
\arrow[Rightarrow, from=A,left, "\eta"]
\end{tikzcd}
\end{center}
$\eta$ se factorise suivant $1_R$ et suivant $R\epsilon\circ\eta R$, par universalité de $\eta$,
on obtient $R\epsilon\circ\eta R=1_R$. Il s'ensuit que $L\ladj R$.
\begin{exem} Extension de Kan et Monade:\\
Soit $G:D\longrightarrow C$ un foncteur. Si $G$ admet une extension \`a droite $(T,\epsilon)$  le long de 
$G$ lui même 
$$	
\xymatrix{
D \ar[rr]^{G} \ar[dr]_{G}& & C\\
    & B \ar@{=>}[u]_{\epsilon}\ar[ur]_T
} 
$$

alors il existe deux transformations naturelles $\eta:1_C\Longrightarrow T$ et $\mu:T^2\Longrightarrow T$
de sorte que $(T,\eta,\mu)$ définit une monade sur $C$.
\end{exem}
\begin{proof}
Considérons le diagramme suivant
$$	
\xymatrix{
D \ar[rr]^{G} \ar[dr]_{G}& & C\\
    & G \ar@{=>}[u]_{1_G}\ar[ur]_{1_C} 
} 
$$
D’après la propriété universelle de $(T,\epsilon)$, il existe une unique transformation naturelle $\eta:1_C\Longrightarrow T$
de sorte que $\epsilon\circ\eta G=1_G$.
La transformation naturelle $\epsilon:TG\Longrightarrow G$ induit une transformation
$\epsilon\circ T\epsilon:T^2G\Longrightarrow G$.
 Le diagramme suivant
$$	
\xymatrix{
D \ar[rr]^{G} \ar[dr]_{G}& & C\\
    & G \ar@{=>}[u]_{\epsilon\circ T\epsilon}\ar[ur]_{T^2} 
} 
$$
avec l’universalité de $(T,\epsilon)$ donne l'existence d'une unique transformation naturelle
$\mu:T^2\Longrightarrow T$ de sorte que $\epsilon\circ\mu G=\epsilon\circ T\epsilon$.

Pour finir, il nous reste \`a voir que $\eta$ et $\mu$ sont respectivement l'unit\'e et la multiplication
de la monade $T$. Le diagramme suivant	
$$
\xymatrix{
D \ar[rr]^{G} \ar[dr]_{G}& & C\\
    & G \ar@{=>}[u]_{\epsilon}\ar[ur]_{T}
} 
$$
assure l'existence d'une unique transformation naturelle $\delta:T\Longrightarrow T$ de sorte que 
$\epsilon\circ\delta G=\epsilon$. Puisque $\epsilon\circ (1_T)G=\epsilon$ alors $\delta=1_T$.
On a aussi:
\begin{eqnarray*}
\epsilon\circ(\mu\circ T\eta)G&=&\epsilon\circ(\mu G\circ T\eta G)\\
                               &=&(\epsilon\circ\mu G)\circ T\eta G\\
															  &=&\epsilon\circ T\epsilon\circ T\eta G\\
																&=&\epsilon\circ T(\epsilon\circ\eta G)\\
																&=&\epsilon\circ T(1_G)\\
																&=&\epsilon
\end{eqnarray*}
Il s'ensuit que $\mu\circ T\eta=1_T$. Un calcul analogue permet de vérifier le reste des identités de l’unité et la multiplication.
\end{proof}
\section{Une formule pour l'extension de Kan}
Soient $A\stackrel{X}{\longrightarrow}C$ et $A\stackrel{K}{\longrightarrow}B$ deux foncteurs de même domaine.
$$	
\xymatrix{
A \ar[rr]^{X} \ar[dr]_{K}& & C\\
    & B \ar@{.>}[ur]_{\lan_K(X)}
} 
$$

 Soit $b\in B$.
Si pour $a\in A$, il existe une flèche $K(a)\longrightarrow b$ alors on peut regarder $K(a)$ comme une approximation \`a gauche de $b$ le long de $K$.
Si on veut d\'efinir la valeur de $\lan_K(X)(b)$ alors il faut considérer toutes les approximations possibles de $b$.
Soit $K\downarrow b$ la comma catégorie dont les objets sont les flèches de la forme $K(a)\longrightarrow b$ et  les flèches sont les flèches
$a\longrightarrow a'$ de $A$ qui fournissent des triangles commutatifs dans $B$. Les objets de la catégorie $K{\downarrow} b$ représentent une  paramétrisation des approximations \`a gauche de $b$ le long de $K$. Soit
$$ \begin{array}{rrl}
\pi^b:K\downarrow b\longrightarrow A\\
     (K(a)\longrightarrow b)\longmapsto a										
\end{array}$$
 La composée $X\circ\pi^b$ renvoie une approximation $(K(a)\longrightarrow b)$ vers $X(a)$, il est donc naturel
de penser que $\lan_K(X)(b)$ est la colimite de $X\circ\pi^b$. 
\begin{thm}\label{th1}Condition suffisante d'optimalit\'e:\\
Soient $A\stackrel{X}{\longrightarrow}C$ et $A\stackrel{K}{\longrightarrow}B$ deux foncteurs de même domaine.
\begin{enumerate}
	\item Si pour tout $b\in B$, $\clm(K{\downarrow} b\stackrel{\pi^b}{\rightarrow}A\stackrel{X}{\rightarrow} C)$ existe 
 alors $\lan_K(X)$ existe et
\begin{equation}
\lan_K(X)(b)=\clm(K{\downarrow} b\stackrel{\pi^b}{\rightarrow}A\stackrel{X}{\rightarrow} C)
\label{eq1}
\end{equation}
\item Si pour tout $b\in B$, $\lim(b{\downarrow}K\stackrel{\pi_b}{\rightarrow}A\stackrel{X}{\rightarrow} C)$ existe 
 alors $\Ran_K(X)$ existe et
\begin{equation}
\Ran_K(X)(b)=\lim(b{\downarrow}K\stackrel{\pi_b}{\rightarrow}A\stackrel{X}{\rightarrow} C)
\label{eq2}
\end{equation} 
\end{enumerate}
\end{thm}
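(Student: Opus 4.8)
Le plan est d'établir la partie 1; la partie 2 s'en déduit formellement par dualité, en remplaçant $C$ par $C^{\op}$, ce qui échange colimites et limites et transforme la comma catégorie $K{\downarrow} b$ (avec sa projection $\pi^b$) en $b{\downarrow} K$ (avec $\pi_b$). Pour la partie 1, je commencerais par poser, pour chaque objet $b\in B$, $L(b):=\clm(K{\downarrow} b \xrightarrow{\pi^b} A \xrightarrow{X} C)$, et je noterais $\iota^b_{(a,f)}:X(a)\to L(b)$ les injections du cône colimite, indexées par les objets $(f:K(a)\to b)$ de $K{\downarrow} b$. Pour munir $L$ d'une structure de foncteur, à toute flèche $g:b\to b'$ de $B$ j'associerais le foncteur $K{\downarrow} g:K{\downarrow} b\to K{\downarrow} b'$ donné par $(K(a)\xrightarrow{f} b)\mapsto(K(a)\xrightarrow{gf} b')$, qui satisfait $\pi^{b'}\circ(K{\downarrow} g)=\pi^b$; par la propriété universelle de la colimite $L(b)$, ceci induit une unique flèche $L(g):L(b)\to L(b')$ caractérisée par $L(g)\circ\iota^b_{(a,f)}=\iota^{b'}_{(a,gf)}$. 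La fonctorialité de $L$ résulte alors de l'unicité dans cette propriété universelle.

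Je construirais ensuite la transformation naturelle $\eta:X\Longrightarrow LK$. Pour $a\in A$, l'objet $1_{K(a)}:K(a)\to K(a)$ de $K{\downarrow} K(a)$ fournit une injection que je prendrais pour composante: $\eta_a:=\iota^{K(a)}_{(a,1_{K(a)})}:X(a)\to L(K(a))$. La naturalité de $\eta$ se vérifie en comparant, pour $u:a\to a'$, les deux chemins via la compatibilité des injections du cône. Un point central, à dégager dès ici, est l'identité de factorisation $\iota^b_{(a,f)}=L(f)\circ\eta_a$ pour tout $(f:K(a)\to b)$: elle découle de la caractérisation de $L(f)$ ci-dessus appliquée à $(a,1_{K(a)})$, et exprime que \emph{toute} injection de la colimite se factorise à travers $\eta$.

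Le cœur de la preuve est la propriété universelle. Étant donnés $L':B\to C$ et $\eta':X\Longrightarrow L'K$, je définirais $\alpha_b:L(b)\to L'(b)$ par la propriété universelle de $L(b)$. Le candidat cône sous $X\pi^b$, de composante $L'(f)\circ\eta'_a:X(a)\to L'(b)$ en $(f:K(a)\to b)$, est effectivement un cône: pour une flèche $u:(a,f)\to(a',f')$ de $K{\downarrow} b$ (donc $f'\circ K(u)=f$), on a $L'(f')\circ\eta'_{a'}\circ X(u)=L'(f')\circ L'(K(u))\circ\eta'_a=L'(f)\circ\eta'_a$ par naturalité de $\eta'$ et fonctorialité de $L'$. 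D'où une unique $\alpha_b$ telle que $\alpha_b\circ\iota^b_{(a,f)}=L'(f)\circ\eta'_a$.

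L'obstacle principal sera alors de vérifier \emph{simultanément} la naturalité de $\alpha$, la factorisation $\eta'=\alpha K\circ\eta$ et l'unicité. La naturalité demande, pour $g:b\to b'$, l'égalité $\alpha_{b'}\circ L(g)=L'(g)\circ\alpha_b$: comme les deux membres partent de la colimite $L(b)$, il suffit de les comparer après précomposition par chaque $\iota^b_{(a,f)}$, ce qui se ramène à $L'(gf)\circ\eta'_a=L'(g)\circ L'(f)\circ\eta'_a$, vrai par fonctorialité de $L'$. La factorisation s'obtient en spécialisant la définition de $\alpha_{K(a)}$ à l'objet $(a,1_{K(a)})$: $\alpha_{K(a)}\circ\eta_a=L'(1_{K(a)})\circ\eta'_a=\eta'_a$. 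Enfin, pour l'unicité, toute $\beta:L\Longrightarrow L'$ vérifiant $\beta K\circ\eta=\eta'$ satisfait, grâce à l'identité de factorisation et à la naturalité de $\beta$, $\beta_b\circ\iota^b_{(a,f)}=\beta_b\circ L(f)\circ\eta_a=L'(f)\circ\beta_{K(a)}\circ\eta_a=L'(f)\circ\eta'_a$; c'est exactement le cône définissant $\alpha_b$, d'où $\beta_b=\alpha_b$ par unicité dans la propriété universelle de la colimite. Ceci établit la propriété universelle de $(L,\eta)$, c'est-à-dire l'isomorphisme naturel (\ref{eq:1}), et donc $\lan_K(X)=L$.
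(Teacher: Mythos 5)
Your proof is correct, but it cannot be compared to the paper's own argument for the simple reason that the paper gives none: after stating Theorem \ref{th1}, it only writes that the result is proved in \cite{ML} and \cite{K}. What you have done is reconstruct, in a self-contained way, essentially the standard argument from those references: define $L(b)=\clm(K{\downarrow}b\stackrel{\pi^b}{\rightarrow}A\stackrel{X}{\rightarrow}C)$, make $L$ functorial via the reindexing functors $K{\downarrow}g$ and the universal property of the colimit, take $\eta_a$ to be the colimit injection at $(a,1_{K(a)})$, isolate the key factorisation $\iota^b_{(a,f)}=L(f)\circ\eta_a$, and verify existence, naturality and uniqueness of $\alpha$ by testing against the jointly epimorphic family of injections. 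All of these verifications are sound as written; in particular your cone check for $\alpha_b$ (using naturality of $\eta'$ and functoriality of $L'$) and your uniqueness argument (using the factorisation identity and naturality of $\beta$) are exactly the points where such proofs usually go wrong, and yours do not. Your proof buys the paper a complete, referee-checkable argument where it currently has only a citation.

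One imprecision should be fixed in the reduction of part 2 to part 1. Replacing $C$ by $C^{\op}$ alone does not touch the comma category: $K{\downarrow}b$ and $\pi^b$ are built from $A$, $B$ and $K$, not from $C$. The correct dualisation applies part 1 to $X^{\op}\colon A^{\op}\to C^{\op}$ along $K^{\op}\colon A^{\op}\to B^{\op}$; one then checks that $K^{\op}{\downarrow}b\cong(b{\downarrow}K)^{\op}$ compatibly with the projections, so that a colimit of $X^{\op}\pi$ over $K^{\op}{\downarrow}b$ computed in $C^{\op}$ is precisely a limit of $X\pi_b$ over $b{\downarrow}K$ computed in $C$, and that a left extension of $X^{\op}$ along $K^{\op}$ is the same data as a right extension of $X$ along $K$. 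This is a one-line repair, not a gap in the construction itself, but as stated ("en remplaçant $C$ par $C^{\op}$") the duality claim is false.
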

Ce théorème a été  démontré dans \cite{ML} et \cite{K}. 
\begin{exem}
On peut \'etendre la  fonction $2^-:\n\longrightarrow\R_+^*$ d\'efinie de façon naturelle \`a un homomorphisme 
de groupes $2^-:\Z\longrightarrow\R_+^*$  en déclarant $2^{-n}=(\frac{1}{2})^n$ qu'on peut l'\'etendre \`a son tour
\`a un homomorphisme $2^-:\Q\longrightarrow\R_+^*$ en déclarant $2^{\frac{1}{n}}=\sqrt[n]{2}$. Il n'existe pas une formule arithmetique
qui permet d'étendre $2^-:\Q\longrightarrow\R_+^*$ \`a une fonction conservant l'ordre $2^-:\R\longrightarrow\R_+^*$. Il s'agit
d'un cas spécial de l'extension d'un foncteur $A\stackrel{X}{\rightarrow}C$ le long d'un foncteur $A\stackrel{K}{\rightarrow}B$.
Regardons $\Q$ et $\R$ comme des catégorie d'ensemble ordonn\'es. Il s'ensuit que $2^-:\Q\longrightarrow\R_+^*$ est un foncteur.
Soit $I:\Q\hookrightarrow\R$ l'injection de $\Q$ dans $\R$. 
$$	
\xymatrix{
\Q \ar[rr]^{2^-} \ar@{^{(}->}[dr]_{I}& & \R_+^*\\
    & \R \ar@{.>}[ur]_{}
} 
$$
Soit $x\in\R$. La catégorie $I\downarrow x$ est $\{q\in\Q:q\leq x\}$. On a
$$\clm(I\downarrow x\stackrel{\pi^x}{\rightarrow}\Q\stackrel{2^-}{\rightarrow} \R_+^*)=\sup\{2^q:q\leq x\}$$
Cette colimite existe car toute partie de $\R$ majorée admet une borne supérieure.
D'après le théorème \ref{th1}, $\lan_I(2^-)$ existe, la formule (\ref{eq1}) s'\'ecrit:
$$\lan_I(2^-)(x)=\sup\{2^q:q\leq x\}.$$
Ainsi, on retrouve la définition usuelle de $2^x$. La formule (\ref{eq2}) donne
$$\Ran_I(2^-)(x)=\inf\{2^q:q\geq x\}.$$
On remarque que dans ce cas les deux extensions \`a gauche et \`a droite coïncident.
\end{exem}
\begin{coro}
Soit 
$$	
\xymatrix{
A \ar[rr]^{X} \ar[dr]_{K}& & C\\
    & B \ar@{.>}[ur] 
} 
$$
Supposons que $A$ est petite et que $B$ est localement petite.
\begin{enumerate}
	\item Si $C$ est cocomplète alors $\lan_K(X)$ existe et est donn\'e par la colimite (\ref{eq1}).
	\item Si $C$ est complète alors $\Ran_K(X)$ existe et est donn\'e par la limite (\ref{eq2}).
\end{enumerate}
\end{coro}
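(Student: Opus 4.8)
Le plan est de déduire ce corollaire directement du Théorème \ref{th1}: il suffit de vérifier que les hypothèses de ce théorème sont satisfaites sous les conditions de cocomplétude (resp. complétude) de $C$. L'observation clé est que les catégories comma $K{\downarrow}b$ et $b{\downarrow}K$ sont \emph{petites} dès que $A$ est petite et $B$ localement petite; une fois cette petitesse établie, l'existence des colimites (resp. limites) figurant dans les formules (\ref{eq1}) et (\ref{eq2}) devient automatique.

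D'abord je traiterais le point 1. Fixons $b\in B$ et examinons la catégorie $K{\downarrow}b$. Un objet est une flèche $K(a)\longrightarrow b$, donc se décrit par la donnée d'un objet $a$ de $A$ et d'un élément de l'ensemble $B(K(a),b)$. Comme $A$ est petite, ses objets forment un ensemble; comme $B$ est localement petite, chaque $B(K(a),b)$ est un ensemble. La collection des objets de $K{\downarrow}b$ est donc une réunion indexée par un ensemble d'ensembles, c'est-à-dire un ensemble. De même, les flèches de $K{\downarrow}b$ sont certaines flèches de $A$, qui forment un ensemble puisque $A$ est petite. Ainsi $K{\downarrow}b$ est une petite catégorie, et le diagramme $K{\downarrow}b\stackrel{\pi^b}{\rightarrow}A\stackrel{X}{\rightarrow}C$ est un petit diagramme dans $C$.

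Puisque $C$ est cocomplète, tout petit diagramme dans $C$ admet une colimite; en particulier $\clm(K{\downarrow}b\stackrel{\pi^b}{\rightarrow}A\stackrel{X}{\rightarrow}C)$ existe pour tout $b\in B$. L'hypothèse du point 1 du Théorème \ref{th1} est donc satisfaite, d'où l'existence de $\lan_K(X)$ et la formule (\ref{eq1}).

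Le point 2 se traite par dualité. La catégorie comma $b{\downarrow}K$ a pour objets les flèches $b\longrightarrow K(a)$, décrites par un objet $a$ de $A$ et un élément de $B(b,K(a))$; le même argument de taille montre qu'elle est petite. La complétude de $C$ garantit alors l'existence de la limite $\lim(b{\downarrow}K\stackrel{\pi_b}{\rightarrow}A\stackrel{X}{\rightarrow}C)$ pour tout $b$, et le point 2 du Théorème \ref{th1} fournit $\Ran_K(X)$ avec la formule (\ref{eq2}). La seule étape réellement délicate est le contrôle de la taille des catégories comma: c'est précisément là qu'interviennent de manière essentielle les hypothèses ``$A$ petite'' et ``$B$ localement petite'', sans lesquelles les colimites et limites considérées pourraient porter sur des diagrammes trop grands pour être garantis par la cocomplétude ou la complétude de $C$.
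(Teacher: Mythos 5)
Votre démonstration est correcte et suit essentiellement la même démarche que celle du papier : vérifier que la petitesse de $A$ (et la petite localité de $B$) rend la catégorie comma $K{\downarrow}b$ petite, puis invoquer la cocomplétude (resp. complétude) de $C$ pour obtenir l'existence des colimites (resp. limites) exigées par le Théorème \ref{th1}. Vous êtes simplement plus explicite que le papier, qui se contente d'une ligne, notamment sur le rôle de l'hypothèse \og $B$ localement petite\fg{} dans le contrôle de la taille des ensembles $B(K(a),b)$, détail que la preuve du papier passe sous silence.
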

\begin{proof}
Si $A$ est petite alors, pour $b\in B$, la comma catégorie  $K{\downarrow}b$ est petite. Si $C$ est cocomplete alors
la colimite (\ref{eq1}) existe.
\end{proof}
\begin{defn}
Soit
$$	
\xymatrix{
A \ar[rr]^{X} \ar[dr]_{K}& & C\ar[rr]^{G}&&D\\
    & B \ar@{=>}[u]_{\epsilon}\ar[ur]_R 
} 
$$
Avec $\Ran_K(X)=R$, de counit\'e $\epsilon$.
On dit que $G$ préserve $\Ran_K(X)$ lorsque $\Ran_K(GX)\cong GR$, de counit\'e $G\epsilon$:
$$
\xymatrix{
A \ar[rr]^{GX} \ar[dr]_{K}& & D\\
    & B \ar@{=>}[u]_{G\epsilon}\ar[ur]_{GR} 
} 
$$
\end{defn}
Les foncteurs qui préservent l'extension \`a gauche sont définies de façon duale.
\begin{prop}
Un adjoint \`a gauche préserve les extensions \`a gauche.
\end{prop}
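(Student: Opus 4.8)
Le plan est de reproduire, pour l'extension à gauche, l'argument par isomorphismes naturels qui sous-tend la caractérisation (\ref{eq:1}), en exploitant qu'un adjoint à gauche possède un adjoint à droite. Notons $G\ladj G'$ cette adjonction, avec $G':D\longrightarrow C$ et l'isomorphisme naturel $D(Gc,d)\cong C(c,G'd)$, d'unité $\tilde\eta:1_C\Longrightarrow G'G$ et de counité $\tilde\epsilon:GG'\Longrightarrow 1_D$. Supposons que $L=\lan_K(X)$ existe, d'unité $\eta:X\Longrightarrow LK$ correspondant à $1_L$ via (\ref{eq:1}). Par dualité avec la définition de préservation donnée pour $\Ran$, il s'agit de montrer que $\lan_K(GX)\cong GL$, d'unité $G\eta:GX\Longrightarrow (GL)K$.

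Je commencerais par établir le lemme clef: toute adjonction $G\ladj G'$ induit, par post-composition, une adjonction entre catégories de foncteurs. Précisément, pour toute catégorie $E$, le foncteur $S\longmapsto GS$ est adjoint à gauche du foncteur $T\longmapsto G'T$, d'où un isomorphisme naturel $D^E(GS,T)\cong C^E(S,G'T)$. Cela se vérifie composante par composante: une transformation naturelle $GS\Longrightarrow T$ a ses composantes dans $D$, que l'adjonction $G\ladj G'$ transpose en les composantes dans $C$ d'une transformation $S\Longrightarrow G'T$, la naturalité de la transposée résultant de celle de l'isomorphisme d'adjonction. J'appliquerais ce lemme pour $E=B$ puis pour $E=A$.

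J'enchaînerais ensuite les isomorphismes, naturels en $H\in D^B$:
\begin{align*}
D^B(GL,H) &\cong C^B(L,G'H)\\
          &\cong C^A(X,(G'H)K)\\
          &= C^A(X,G'(HK))\\
          &\cong D^A(GX,HK).
\end{align*}
La première ligne provient du lemme ($E=B$), la deuxième de la propriété universelle (\ref{eq:1}) de $L$ appliquée à $G'H\in C^B$, l'égalité centrale de l'associativité de la composition des foncteurs, et la dernière du lemme ($E=A$). Le composé de ces isomorphismes est précisément l'isomorphisme naturel (\ref{eq:1}) qui caractérise $\lan_K(GX)=GL$.

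Le point délicat sera de vérifier que l'unité ainsi obtenue est bien $G\eta$, et non seulement qu'un isomorphisme abstrait existe. Pour cela je suivrais l'identité $1_{GL}$ le long de la chaîne, avec $H=GL$: par le lemme, $1_{GL}$ se transpose en $\tilde\eta L:L\Longrightarrow G'GL$; puis (\ref{eq:1}) l'envoie sur $\tilde\eta LK\circ\eta$; enfin la transposition inverse du lemme pour $E=A$, qui fait intervenir la counité $\tilde\epsilon$, donne $\tilde\epsilon(GLK)\circ G\tilde\eta LK\circ G\eta$. L'identité triangulaire $\tilde\epsilon G\circ G\tilde\eta=1_G$, restreinte le long de $LK$, réduit les deux premiers facteurs à $1_{GLK}$, ce qui laisse $G\eta$. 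C'est ce recollement entre les unités de l'adjonction $G\ladj G'$ et l'unité de Kan $\eta$, via l'identité triangulaire, qui constitue l'essentiel du calcul; tout le reste n'est que naturalité. Par dualité, le même raisonnement montrerait qu'un adjoint à droite préserve les extensions à droite.
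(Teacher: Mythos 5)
Votre démonstration est correcte et suit essentiellement la même voie que celle de l'article : la chaîne d'isomorphismes naturels $D^B(GL,H)\cong C^B(L,G'H)\cong C^A(X,G'(HK))\cong D^A(GX,HK)$, combinant l'adjonction induite sur les catégories de foncteurs avec la propriété universelle (\ref{eq:1}) de $L$, est exactement le calcul de l'article (qui note $F$ votre $G'$ et utilise le lemme d'adjonction induite sans l'énoncer explicitement). Votre vérification finale que l'unité obtenue est bien $G\eta$ (en suivant $1_{GL}$ le long de la chaîne et en invoquant l'identité triangulaire) va au-delà de la preuve de l'article, qui s'arrête à l'isomorphisme $\lan_K(GX)\cong GL$ alors même que sa propre définition de la préservation exige cette condition sur l'unité.
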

\begin{proof}
Soit
$$	
\xymatrix{
A \ar[rr]^{X} \ar[dr]_{K}& & C\ar[rr]^{G}&&D\\
    & B \ar@{<=}[u]_{\eta}\ar[ur]_L 
} 
$$
Avec $\lan_K(X)=L$, d'unit\'e $\eta$.\\
Si $G$ est l'adjoint \`a gauche d'un foncteur $F:D\longrightarrow C$. Pour $H\in D^B$, on a 
\begin{eqnarray*}
D^B(GL,H)&\cong&C^B(L,FH)\:\:(G\ladj F)\\
          &\cong& C^A(X,FHK)\:\:(\lan_K(X)\cong L)\\
					&\cong& D^A(GX,HK)\:\:(G\ladj F)
\label{eq:}
\end{eqnarray*}
Il s'ensuit que $$\lan_K(GX)\cong GL\cong G\lan_K(X)$$
C'est \`a dire $G$ préserve $\lan_K(X)$
\end{proof}
\begin{defn}
Soit 
$$	
\xymatrix{
A \ar[rr]^{X} \ar[dr]_{K}& & C\ar[rr]^{H^c}&&\Set\\
    & B \ar@{=>}[u]_{\epsilon}\ar[ur]_{\Ran_K(X)} 
} 
$$
On dit que $\Ran_K(X)$ est simple (ou ponctuelle) lorsque les représentables covariants $H^c=C(c,-)$
préservent $\Ran_K(X)$, c'est \`a dire pour $c\in C$, $\Ran_K(H^cX)\cong H^c(\Ran_K(X))$
\end{defn}
\begin{thm}\label{th2}Critère de ponctualité:\\
\begin{enumerate}
	\item $\Ran_K(X)$ est simple $\Leftrightarrow \Ran_K(X)(b)\cong\lim(b{\downarrow}K\stackrel{\Pi}{\rightarrow}A\stackrel{X}{\rightarrow}C)$.
	\item $\lan_K(X)$ est simple $\Leftrightarrow \lan_K(X)(b)\cong\clm(b{\downarrow}K\stackrel{\Pi}{\rightarrow}A\stackrel{X}{\rightarrow}C)$.
\end{enumerate}

\end{thm}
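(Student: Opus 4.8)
Le plan est de ramener ce critère à deux faits standards, puis de recoller les isomorphismes obtenus. Le premier fait est que, $\Set$ étant complète, toute extension à droite à valeurs dans $\Set$ existe et est donnée par la formule ponctuelle du Théorème \ref{th1} appliquée au foncteur $H^c X:A\to\Set$ ; en particulier
$$\Ran_K(H^c X)(b)\cong\lim\bigl(b{\downarrow}K\stackrel{\Pi}{\rightarrow}A\stackrel{H^c X}{\rightarrow}\Set\bigr)=\lim_{b{\downarrow}K}C(c,X\Pi(-)).$$
Le second fait est que, dans $\Set$, la limite du diagramme $C(c,X\Pi(-))$ n'est autre que l'ensemble $\Cone(c,X\Pi)$ des cônes de sommet $c$ sur $X\circ\Pi$, et que plus généralement un représentable $C(c,-)$ préserve toutes les limites existantes. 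Je traiterais d'abord la partie 1 relative à $\Ran_K(X)$ ; la partie 2 s'en déduit par dualité.

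Pour le sens direct de 1, supposons $\Ran_K(X)$ simple et posons $R=\Ran_K(X)$. La simplicité donne $\Ran_K(H^c X)(b)\cong H^c(R(b))=C(c,R(b))$, tandis que les deux faits ci-dessus fournissent $\Ran_K(H^c X)(b)\cong\Cone(c,X\Pi)$. En combinant ces isomorphismes on obtient
$$C(c,R(b))\cong\Cone(c,X\Pi)$$
naturellement en $c$. Or ceci est exactement la propriété universelle caractérisant $R(b)$ comme limite de $X\circ\Pi$ : le foncteur $c\mapsto\Cone(c,X\Pi)$ est représenté par $R(b)$, donc par le lemme de Yoneda la limite existe et $R(b)\cong\lim(b{\downarrow}K\to A\to C)$.

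Réciproquement, supposons la formule ponctuelle $R(b)\cong\lim(b{\downarrow}K\to A\to C)$. Comme $C(c,-)$ préserve les limites, on a la chaîne d'isomorphismes
$$C(c,R(b))\cong C\bigl(c,\lim_{b{\downarrow}K}X\Pi\bigr)\cong\lim_{b{\downarrow}K}C(c,X\Pi(-))=\Cone(c,X\Pi)\cong\Ran_K(H^c X)(b),$$
le dernier isomorphisme étant de nouveau la formule ponctuelle dans $\Set$. Ceci signifie que $H^c$ préserve $\Ran_K(X)$ pour tout $c\in C$, c'est-à-dire que $\Ran_K(X)$ est simple.

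La vraie difficulté n'est pas calculatoire mais tient à la naturalité : il faut vérifier que tous les isomorphismes ci-dessus sont naturels en $c$ — indispensable pour invoquer Yoneda et reconnaître $R(b)$ comme objet limite dans le sens direct — et en $b$, afin d'obtenir une égalité de foncteurs dans $C^B$ et non une simple bijection objet par objet. Ces naturalités se suivent en traçant soigneusement les isomorphismes de la formule du Théorème \ref{th1} et ceux de la préservation des limites par les représentables ; c'est le point qui réclame le plus de vigilance. Enfin, la partie 2 relative à $\lan_K(X)$ s'obtient en dualisant : on remplace les limites par des colimites et l'ensemble des cônes par celui des cônes sous le diagramme, en travaillant avec la comma catégorie et la colimite figurant dans la formule (\ref{eq1}).
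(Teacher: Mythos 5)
Your proof is correct and reaches the paper's conclusion by the same overall mechanism --- use simplicity to identify $C(c,\Ran_K(X)(b))$ with $\Ran_K(H^cX)(b)$, identify the latter with the cone set $\Cone(c,X\Pi_b)$, and conclude by representability and Yoneda, the converse resting on the fact that representables preserve limits --- but the middle of your forward direction goes through a genuinely different lemma. The paper computes $\Ran_K(H^cX)(b)$ purely formally: by the Yoneda lemma it is $\Set^B(H^b,H^c\Ran_K(X))\cong\Set^B(H^b,\Ran_K(H^cX))$, by the defining isomorphism (\ref{eq:2}) this is $\Set^A(H^bK,H^cX)$, and an explicit bijection identifies such natural transformations with cones of apex $c$ over $X\Pi_b$; no limit in $\Set$ is ever formed, so no completeness argument is needed. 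You instead apply Theorem \ref{th1} to the $\Set$-valued functor $H^cX$: since $\Set$ est complète, $\Ran_K(H^cX)$ exists and is computed pointwise as $\lim(b{\downarrow}K\rightarrow A\rightarrow\Set)$, which is exactly $\Cone(c,X\Pi_b)$. Your route is shorter and reuses a theorem already available, but it silently requires the comma categories $b{\downarrow}K$ to be small (otherwise those limits of sets need not exist), a hypothesis the paper's formal route avoids --- its only size concern is that $\Set^A(H^bK,H^cX)$ be a set, which the paper handles by its earlier passage to the larger universe $\SET$. Both arguments leave the same naturality verifications implicit (in $c$, in $b$, and compatibility of the simplicity isomorphism with the counits); you at least flag them explicitly. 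Like the paper, you dispose of part 2 by duality.
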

\begin{proof}
$\Rightarrow:$\\
Soit $c\in C$. 
$$
\xymatrix{
A \ar[rr]^{H^cX} \ar[dr]_{K}& & \Set\\
    & B \ar@{<=}[u]_{H^c\epsilon}\ar[ur]_{H^c\Ran_K(X)} 
} 
$$
On a 
\begin{eqnarray*}
C(c,\Ran_K(X)(b))&\cong& \Set^B(H^b,H^c\Ran_K(X))\:\:\:(\mbox{Lemme de Yoneda})\\
                 &\cong& \Set^B(H^b,\Ran_K(H^cX)\:\:(\mbox{$\Ran_K(X)$ est ponctuelle})\\
								&\cong& \Set^A(H^bK,H^cX)\:\:(\mbox{Par définition de $\Ran_K(H^cX)$})\\
								&\cong& Cones(c,X\Pi_b)\\
								&\cong&C(c,\lim_{b{\downarrow}K}X\Pi_b)
\label{eq:}
\end{eqnarray*}
Pour justifier l'isomorphisme $\Set^A(H^bK,H^cX)\cong Cones(c,X\Pi_b)$ il suffit d'\'ecrire
un cône de sommet $c$ sur $X\Pi_b$ et une transformation naturelle $\alpha:H^bK\Longrightarrow H^cX$
et remarquer la correspondance bijective entre les deux.\\
$\Leftarrow:$\\
Si $\Ran_K(X)(b)\cong \lim_{b{\downarrow}K}X\Pi_b)$ alors
 $$H^c(\Ran_K(X)(b))\cong H^c(\lim_{b{\downarrow}K}X\Pi_b)\cong\lim_{b{\downarrow}K}H^cX\Pi_b\cong \Ran_K(H^cX)(b)$$
car les représentables covariants préservent les limites. 
\end{proof}
\begin{exem}
Soit $A\stackrel{X}{\rightarrow}C$ un foncteur. En appliquant la propriété universelle qui
définit l'extension \`a droite de $X$ le long de $1_A$ on trouve $\Ran_{1_A}(X)\cong X$.
Pour la même raison $\Ran_{1_A}(H^cX)\cong H^cX\cong H^c(\Ran_{1_A}(X))$.
Il s'ensuit que $\Ran_{1_A}(X)$ est ponctuelle. D'après le théorème \ref{th2}, 
$$X(a)\cong \lim(a{\downarrow}A\stackrel{\Pi}{\rightarrow}A\stackrel{X}{\rightarrow}C)$$
En particulier, si $C=\Set$ alors on obtient
$$X(a)\cong\lim_{a{\downarrow}A}X\Pi\cong Cones(1,X\Pi)\cong \Set^A(H^a,X)$$
Ainsi, on retrouve le  lemme de Yoneda. Par dualit\'e, on obtient:
$$X(a)\cong \clm(A{\downarrow}a\stackrel{\Pi}{\rightarrow}A\stackrel{X}{\rightarrow}C),$$
c'est ce qu'on appelle le lemme de coYoneda.
\end{exem}
\begin{coro}
Soit 
$$
\xymatrix{
A \ar[rr]^{X} \ar[dr]_{K}& & C\\
    & B  
} 
$$
Supposons que $X$ admet ponctuellement une extension \`a droite. Si $K$ est pleinement fidel
alors $\Ran_K(X)K\cong X$
\end{coro}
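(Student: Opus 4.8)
Le plan est de partir de la formule ponctuelle et d'exploiter la pleine fidélité de $K$ pour réduire la limite qui calcule $\Ran_K(X)(K(a))$ à sa valeur en un objet initial. Comme $X$ admet ponctuellement une extension à droite le long de $K$, le théorème \ref{th2} fournit, pour tout $b\in B$, l'isomorphisme
$$\Ran_K(X)(b)\cong\lim(b{\downarrow}K\stackrel{\pi_b}{\rightarrow}A\stackrel{X}{\rightarrow}C).$$
Je fixerais $a\in A$ et j'évaluerais cette formule en $b=K(a)$; il resterait alors à identifier $\ds\lim(K(a){\downarrow}K\stackrel{\pi_{K(a)}}{\rightarrow}A\stackrel{X}{\rightarrow}C)$ avec $X(a)$.

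Le point clé est que la comma catégorie $K(a){\downarrow}K$ possède un objet initial dès que $K$ est pleinement fidèle. Ses objets sont les flèches $K(a)\stackrel{f}{\rightarrow}K(a')$ de $B$, et je prétends que $(a,1_{K(a)})$ est initial. En effet, une flèche de $(a,1_{K(a)})$ vers $(a',f)$ est une flèche $g:a\rightarrow a'$ de $A$ telle que $K(g)\circ 1_{K(a)}=f$, c'est-à-dire $K(g)=f$; la pleine fidélité de $K$ garantit l'existence et l'unicité d'un tel $g$. L'identité $1_{K(a)}$ est donc bien un objet initial de $K(a){\downarrow}K$.

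J'invoquerais ensuite le fait qu'une limite d'un diagramme indexé par une catégorie munie d'un objet initial coïncide avec la valeur du diagramme en cet objet initial (l'objet initial fournit un cône limite). Puisque $\pi_{K(a)}(a,1_{K(a)})=a$, on obtiendrait
$$\Ran_K(X)(K(a))\cong\lim(K(a){\downarrow}K\stackrel{\pi_{K(a)}}{\rightarrow}A\stackrel{X}{\rightarrow}C)\cong X(\pi_{K(a)}(a,1_{K(a)}))=X(a).$$
Il resterait à vérifier la naturalité en $a$: une flèche $h:a\rightarrow\bar a$ de $A$ induit, par précomposition par $K(h)$, un foncteur $K(\bar a){\downarrow}K\rightarrow K(a){\downarrow}K$ au-dessus de $A$, et les isomorphismes précédents sont compatibles avec ce foncteur, de sorte qu'ils se recolleraient en un isomorphisme naturel $\Ran_K(X)K\cong X$.

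Le principal obstacle sera moins l'existence de l'objet initial, qui découle formellement et immédiatement de la pleine fidélité, que la vérification soigneuse de deux points: d'une part que la limite se réduit effectivement à la valeur en l'objet initial (c'est-à-dire que le cône associé à $1_{K(a)}$ est bien universel), d'autre part que la famille d'isomorphismes $\Ran_K(X)(K(a))\cong X(a)$ est naturelle en $a$, cette dernière étape exigeant de suivre précisément l'action des foncteurs de précomposition sur les cônes limites.
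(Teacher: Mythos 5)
Votre preuve est correcte et suit essentiellement la même démarche que celle de l'article : formule ponctuelle évaluée en $K(a)$, puis réduction de la limite sur $K(a){\downarrow}K$ à sa valeur en l'objet initial $1_{K(a)}$, la pleine fidélité de $K$ garantissant que cet objet est bien initial (l'article formule cela en identifiant $K(a){\downarrow}K$ avec $a/A$, ce qui revient au même). Votre attention à la naturalité en $a$ est un soin supplémentaire que l'article passe sous silence.
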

\begin{proof}
Pour $a\in A$,
\begin{eqnarray*}
\Ran_K(X)(K(a))&\cong&\lim_{K(a){\downarrow}K}[X\Pi]\\
                &\cong& \lim_{a/A}[X\Pi]\\
								&\cong&X\Pi(1_a)\mbox{  ($1_a$ est initial dans $a/A$)}\\
								&\cong& X(a)
\label{eq:}
\end{eqnarray*}
\end{proof}
\begin{thm}\label{th3}
Pour toute petite catégorie $C$, l'extension \`a gauche du plongement de Yoneda $y:C\hookrightarrow \Set^{C^{\op}}$ le long de $y$ lui même
est le foncteur identit\'e.
\end{thm}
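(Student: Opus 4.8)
The plan is to read off $\Lan_y(y)$ from the pointwise colimit formula and then to recognise the resulting colimit as the presheaf we started with; in other words, to prove the statement as an instance of the density of the Yoneda embedding. Write $\widehat{C} = \Set^{C^{\op}}$. Since $C$ is small, $\widehat{C}$ is locally small and cocomplete, so the Corollaire following Theorem \ref{th1} applies with common domain $C$, with $B = \widehat{C}$, and with target $\widehat{C}$: the left Kan extension $\Lan_y(y)\colon \widehat{C}\to\widehat{C}$ exists and is computed pointwise by (\ref{eq1}),
$$\Lan_y(y)(P) = \clm\big(y \downarrow P \stackrel{\pi^P}{\rightarrow} C \stackrel{y}{\rightarrow} \widehat{C}\big), \qquad P \in \widehat{C}.$$

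First I would identify the comma category $y \downarrow P$ with the category of elements of $P$. By the Yoneda lemma an object $y(c)\to P$ corresponds to an element $x\in P(c)$, and a morphism $f\colon c\to c'$ of $C$ is a morphism of $y\downarrow P$ precisely when $P(f)(x')=x$; under this identification the diagram $y\of\pi^P$ sends $(c,x)$ to the representable $y(c)$. The entire content of the theorem is then the assertion that this canonical diagram of representables has colimit $P$ itself.

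To prove that, I would test the colimit against an arbitrary $Q\in\widehat{C}$ through the defining universal property $\widehat{C}\big(\clm(y\of\pi^P),Q\big)\cong\mathrm{Cocones}(y\of\pi^P,Q)$, and exhibit a bijection, natural in $Q$,
$$\mathrm{Cocones}(y\of\pi^P, Q) \cong \widehat{C}(P,Q).$$
This is the step I expect to carry the real weight. A cocone assigns to each $(c,x)$ a map $y(c)\to Q$, which by Yoneda is an element $q_{(c,x)}\in Q(c)$; the cocone compatibility condition along a morphism $f\colon (c,x)\to(c',x')$ reads $Q(f)(q_{(c',x')})=q_{(c,x)}$, which is exactly the naturality square of a transformation $\theta\colon P\Rightarrow Q$ with $\theta_c(x)=q_{(c,x)}$. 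Verifying that these two assignments are mutually inverse and natural in $Q$ — the argument runs just as the cocone/transformation correspondence already used in the proof of Theorem \ref{th2} — establishes the bijection.

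Combining the two displayed isomorphisms yields $\widehat{C}(\clm(y\of\pi^P),Q)\cong\widehat{C}(P,Q)$ naturally in $Q$, whence $\clm(y\of\pi^P)\cong P$ by the Yoneda lemma in $\widehat{C}$. Since every identification above is functorial in $P$, these isomorphisms assemble into a natural isomorphism $\Lan_y(y)\cong\mathrm{id}_{\widehat{C}}$, which is the claim. The only delicate points are bookkeeping ones: confirming the smallness and cocompleteness hypotheses needed to invoke the Corollaire, and keeping the variance in the category of elements straight so that the cocone condition matches naturality of $\theta\colon P\Rightarrow Q$ rather than its opposite.
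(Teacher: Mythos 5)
Your proposal is correct and follows essentially the same route as the paper's proof: invoke the pointwise colimit formula of Theorem \ref{th1} (via cocompleteness of $\Set^{C^{\op}}$), identify the comma category $y{\downarrow}P$ with the category of elements $\int P$, and recognise the colimit of the resulting diagram of representables as $P$ itself. The only difference is that the paper dispatches this last step by citing the density theorem (``Th\'eor\`eme de densit\'e''), whereas you prove it inline through the cocone/natural-transformation bijection --- a self-contained expansion of the very same argument, with the variance bookkeeping handled correctly.
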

\begin{proof}
Soit $C$ une catégorie petite.
$$
\xymatrix{
C \ar@{^{(}->}[rr]^{y} \ar@{^{(}->}[dr]_{y}& & \Set^{C^{\op}}\\
    & \Set^{C^{\op}} \ar@{<=}[u]_{1_y}\ar@{-->}[ur]_{1\cong\lan_y(y)} 
} 
$$
$\Set^{C^{\op}}$ est une catégorie bicomplète.  Pour $F:C^{\op}\longrightarrow\Set$, on a
$$\begin{array}{rcll}
\lan_y(y)(F)&\cong&\clm(y{\downarrow}F\stackrel{\Pi}{\rightarrow}C\stackrel{y}{\hookrightarrow} \Set^{C^{\op}})&\mbox{(Théorème \ref{th1})}\\
             &\cong&\clm(\int F\stackrel{\Pi}{\rightarrow}C\stackrel{y}{\hookrightarrow} \Set^{C^{\op}})&(y{\downarrow}F\cong\int F)\\
						&\cong&\underset{(c,x)\in\int F}\clm H_c&\\
						&\cong& F &\mbox{(Théorème de densit\'e)}
\end{array}$$
\end{proof}
\begin{exer}
Soit $F:C\longrightarrow E$ un foncteur. On suppose que 
$C$ est petite et que $E$ est localement petite et est co-complète.
\begin{enumerate}
	\item Notons $y:C\hookrightarrow \Set^{C^{\op}}$ le plongement de Yoneda. Montrer que $F$ admet une extension
	\`a gauche le long de $y$.
	\item\label{def} Si $\lan_y(F)$ admet un adjoint \`a droite $R:E\longrightarrow \Set^{C^{\op}}$ alors comment doit s'exprimer
	$R_e(c)$ pour $e\in E$ et $c\in C^{\op}$?
	\item Prouver que $R$ trouvé dans (\ref{def}) est effectivement l'adjoint \`a droite de $\lan_y(F)$.
\end{enumerate}
\end{exer}
{\bf Solution:}\\
$$
\xymatrix{
C \ar[rr]^{F} \ar@{^{(}->}[dr]_{y}& & E\\
    & \Set^{C^{\op}} \ar@{-->}[ur]_{\lan_y(F)} 
} 
$$
\begin{enumerate}
	\item Puisque $C$ est petite et $E$ est co-complete alors les colimites 
	$\clm(y{\downarrow}G\stackrel{\Pi}{\rightarrow}C\stackrel{F}{\rightarrow} E)$ existent et définissent, d'après le théorème \ref{th1},
	$\lan_y(F)(G)$.
	\item Si $\lan_y(F)\ladj R$ alors pour $e\in E$ et $c\in C^{\op}$
	$$\begin{array}{rcll}
R_e(c)&\cong&\Set^{C^{\op}}(y_c,R_e)&\mbox{(Lemme de Yoneda)}\\
             &\cong&E(\lan_y(F)(y_c),e)&(\lan_y(F)\ladj R)\\
						&\cong&E(F(c),e)&\mbox{($y$ est pleinement fidel)}
	\end{array}$$
	\item Soit $X\in \Set^{C^{\op}}$. Pour $e\in E$, on a
	$$\begin{array}{rcll}
\Set^{C^{\op}}(X,R_e)&\cong&\Set^{C^{\op}}(\underset{(c,x)\in\int X}\clm y_c,R_e)&\mbox{(Théorème de densit\'e)}\\
             &\cong&\ds\lim_{(c,x)\in\int X}\Set^{C^{\op}}(y_c,R_e)&\\
						&\cong&\ds\lim_{(c,x)\in\int X}R_e(c) &\mbox{(Lemme de Yoneda)}\\
						&\cong&\ds\lim_{(c,x)\in\int X}E(F(c),e)&\mbox{(Formule de $R$)}\\
						&\cong&E(\underset{(c,x)\in\int X}\clm F(c),e)&\\
						&\cong&E(\underset{y{\downarrow}X}\clm F\Pi,e)&\\
						&\cong&E(\lan_y(F)(X),e)&\\
	\end{array}$$
						
\end{enumerate}

\end{document}